\newtheorem{thm}{Theorem}[section]
\newtheorem{defn}[thm]{Definition}
\newtheorem{rem}[thm]{\bf{Remark}}
\numberwithin{equation}{section}
\begin{document}
\begin{center}\small{In the name of Allah, Most Gracious, Most Merciful.}\end{center}
\vspace{1cm}

\title{Classification of $2$-dimensional evolution algebras, their groups of automorphisms and derivation algebras}
\author{H.Ahmed$^1$, U.Bekbaev$^2$, I.Rakhimov$^3$}

\thanks{{\scriptsize
emails: $^1$houida\_m7@yahoo.com; $^2$bekbaev@iium.edu.my; $^3$rakhimov@upm.edu.my.}}
\maketitle
\begin{center}
\address{$^{1}$Department of Math., Faculty of Science, UPM, Selangor, Malaysia $\&$ Depart. of Math., Faculty of Science, Taiz University, Taiz, Yemen}\\
  \address{$^2$Department of Science in Engineering, Faculty of Engineering, IIUM, Kuala Lumpur, Malaysia}\\
\address{$^3$Department of Math., Faculty of Science $\&$ Institute for Mathematical Research (INSPEM), UPM, Serdang, Selangor, Malaysia}
\end{center}

\begin{abstract}   In the paper we give a complete classification of $2$-dimensional evolution algebras over algebraically closed fields, describe their groups of automorphisms and derivation algebras.
\end{abstract}
{\scriptsize Keywords: evolution algebra, structure constants, automorphism, derivation.

MSC(2010): Primary: 15A72; Secondary: 22F50, 20H20, 17A60.}
\pagestyle{myheadings}
\markboth{\rightline {\sl   H. Ahmed, U.Bekbaev, I.Rakhimov.}}
         {\leftline{\sl   Evolution algebras}}

\bigskip

\section{\bf Introduction}
The classification problem of finite dimensional algebras and description of their invariants with respect to the basis changes is one of the important problems of algebra. One of the interesting class of algebras is the class of evolution algebras. In the present paper we give a complete classification of $2$-dimensional evolution algebras over any algebraically closed field, describe their groups of automorphisms and algebras of derivations. For the further information, related to similar problems, the reader is refereed to \cite{C, T}.
\vskip 0.4 true cm

\section{\bf Classification of $2$-dimensional evolution algebras}
\vskip 0.4 true cm

Let $\mathbb{A}$ be an $n$-dimensional algebra over an algebraic closed field $\mathbb{F}$ with a multiplication $\cdot$ given by a bilinear map $(\mathbf{u},\mathbf{v})\mapsto \mathbf{u}\cdot \mathbf{v}$ whenever $\mathbf{u}, \mathbf{v}\in \mathbb{A}.$ If $e=(e_1,e_2,...,e_n)$ is a
basis of $\mathbb{A}$ over $\mathbb{F}$ then one can represent this bilinear map by a matrix $A=\left(A^k_{i,j}\right) \in Mat(n\times n^2;\mathbb{F})$ as follows \[\mathbf{u}\cdot \mathbf{v}=eA(u\otimes v)\] for any $\mathbf{u}=eu,\mathbf{v}=ev,$
where $u=(u^1, u^2,..., u^n),$ and $ v=(v^1, v^2,..., v^n)$ are column vectors of $\mathbf{u}$ and $\mathbf{v},$ respectively, $e_i\cdot e_j=A^{1}_{i,j}e_1+A^{2}_{i,j}e_2+...+A^{n}_{i,j}e_n$ whenever $i,j=1,2,...,n.$
The matrix $A\in Mat(n\times n^2;\mathbb{F})$ is called the matrix of structure constants (MSC) of $\mathbb{A}$ with respect to the basis $e.$ Further we do not differentiate $\mathbb{A}$ and its MSC $A.$

It is known that under change of the basis $e=(e^1,e^2,...,e^n)$ by $g\in GL(n,\mathbb{F})$ the matrix $A$ changes according to the rule $B=gA(g^{-1})^{\otimes 2}$ that motivates to give the following definition.

\begin{defn} $n$-dimensional algebras $\mathbb{A},\ \mathbb{B},$ given by
	their matrices of structural constants $A,\ B,$ are said to be isomorphic if $B=gA(g^{-1})^{\otimes 2}$ holds true for some $g\in GL(n,\mathbb{F}).$\end{defn}

Let us recall definition of evolution algebras which are the object on focus of the paper.
\begin{defn} An $n$-dimensional algebra $\mathbb{E}$ is said to be an evolution algebra if it admits a basis $\{e^1,e^2,...,e^n\}$ such that $e^i \cdot e^j=0,$ whenever $i\neq j$ and $i,j=1,2,...,n.$\end{defn}
The observations above in $2$-dimensional case is viewed as follows.
Let $\mathbb{\mathbb{A}}$ be a $2$-dimensional algebra then on a basis $e=(e^1,e^2)$ we have
\[A=\left(\begin{array}{cccc}
A^{1}_{1,1} & A^{1}_{1,2}& A^{1}_{2,1}& A^{1}_{2,2} \\
A^{2}_{1,1} & A^{2}_{1,2}& A^{2}_{2,1}& A^{2}_{2,2} \\
\end{array}
\right) \in Mat(2\times 4;\mathbb{F})\]
and $B=gA(g^{-1})^{\otimes 2},$ where for
$g^{-1}=\left(\begin{array}{cccc} \xi_1& \eta_1\\ \xi_2& \eta_2\end{array}\right)\in GL(2,\mathbb{F})$ one has
\[(g^{-1})^{\otimes 2}=g^{-1}\otimes g^{-1}=\left(\begin{array}{cccc} \xi _1^2 & \xi _1 \eta _1 & \xi _1 \eta _1 & \eta _1^2 \\
\xi _1 \xi _2 & \xi _1 \eta _2 & \xi _2 \eta _1 & \eta _1 \eta _2 \\
\xi _1 \xi _2 & \xi _2 \eta _1 & \xi _1 \eta _2 & \eta _1 \eta _2 \\
\xi _2^2 & \xi _2 \eta _2 & \xi _2 \eta _2 & \eta _2^2
\end{array}
\right).\]

Here is a theorem on description of all evolution algebra structures on a $2$-dimensional vector space over $\mathbb{\mathbb{F}}.$
\begin{thm} \label{T1} Over any algebraically closed field $\mathbb{F}$ every nontrivial $2$-dimensional evolution algebra is isomorphic to only one of the algebras listed below by MSC:
\begin{itemize}
	\item $E_{1}(c,b)\simeq E_{1}(b,c)=\left(\begin{array}{cccc} 1 & 0 &0 &b\\ c & 0 & 0 &1\end{array}\right),\ \mbox{ where}\ bc\neq 1,\ (b, c)\in\mathbb{F}^2,$
	\item $E_{2}(b)=\left(\begin{array}{cccc} 1 & 0 &0 &b\\ 1 & 0 & 0 &0\end{array}\right),\ \mbox{ where}\ b\in\mathbb{F},$
	\item $E_{3}=\left(\begin{array}{cccc} 0 & 0 &0 &1\\ 1 & 0 & 0 &0\end{array}\right),$
\item $E_{4}=\left(\begin{array}{cccc} 1 & 0 &0 &1\\ 0 & 0 & 0 &0\end{array}\right),$
	\item $E_5=\left(\begin{array}{cccc} 1 & 0 &0 &-1\\ -1 & 0 & 0 &1\end{array}\right),$
	\item $E_{6}=\left(\begin{array}{cccc} 0 & 0 &0 &1\\ 0 & 0 & 0 &0\end{array}\right).$
\end{itemize}
\end{thm}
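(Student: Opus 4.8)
\emph{Proof strategy.} The plan is to pass to a normal form for the matrix of structure constants, stratify the problem by one coarse isomorphism invariant, and finish each stratum by a short case analysis. By the definition of an evolution algebra, a nontrivial $2$-dimensional evolution algebra has, in a suitable basis $e=(e_1,e_2)$, an MSC of the form
\[A=\left(\begin{array}{cccc} a_1 & 0 & 0 & a_2\\ a_3 & 0 & 0 & a_4\end{array}\right),\qquad (a_1,a_2,a_3,a_4)\neq(0,0,0,0),\]
so all the structure is carried by the $2\times 2$ matrix $M=\left(\begin{array}{cc} a_1 & a_2\\ a_3 & a_4\end{array}\right)$, whose columns are $e_1\cdot e_1$ and $e_2\cdot e_2$. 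Writing $g^{-1}=\left(\begin{array}{cc}\xi_1 & \eta_1\\ \xi_2 & \eta_2\end{array}\right)$ and substituting the displayed formula for $(g^{-1})^{\otimes 2}$ into $B=gA(g^{-1})^{\otimes 2}$, a direct computation shows that the two middle columns of $A(g^{-1})^{\otimes 2}$ coincide, both being $M\,(\xi_1\eta_1,\ \xi_2\eta_2)^{\mathsf T}$; hence $B$ is again in evolution form if and only if $M\,(\xi_1\eta_1,\ \xi_2\eta_2)^{\mathsf T}=0$, a condition we call $(\star)$, and when $(\star)$ holds the two nonzero columns of the new structure matrix are $g\,(a_1\xi_1^2+a_2\xi_2^2,\ a_3\xi_1^2+a_4\xi_2^2)^{\mathsf T}$ and $g\,(a_1\eta_1^2+a_2\eta_2^2,\ a_3\eta_1^2+a_4\eta_2^2)^{\mathsf T}$. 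For a monomial $g$ (diagonal or antidiagonal) this specializes to the rescaling $a_1\mapsto\lambda a_1$, $a_4\mapsto\mu a_4$, $a_2\mapsto(\mu^2/\lambda)a_2$, $a_3\mapsto(\lambda^2/\mu)a_3$, optionally followed by the flip $(a_1,a_2,a_3,a_4)\mapsto(a_4,a_3,a_2,a_1)$.

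Since the span of all products, $\mathbb{A}\cdot\mathbb{A}=\langle e_1\cdot e_1,\ e_2\cdot e_2\rangle$, has dimension $\operatorname{rank}M$ and $\dim(\mathbb{A}\cdot\mathbb{A})$ is an isomorphism invariant, the cases $\det M\neq 0$ and $\operatorname{rank}M=1$ do not interact. If $\det M\neq 0$, then $(\star)$ forces $\xi_1\eta_1=\xi_2\eta_2=0$, which together with $\xi_1\eta_2-\xi_2\eta_1\neq0$ forces $g$ to be monomial; thus in this stratum \emph{every} isomorphism onto an evolution form is monomial, and the classification reduces to the explicit action above. Splitting according to whether both, exactly one, or neither of $a_1,a_4$ is nonzero (noting $\det M\neq 0$ forces $a_2a_3\neq0$ in the last two cases), I would rescale, extracting a cube root in the last case where algebraic closedness of $\mathbb{F}$ is used, to reach precisely $E_1(c,b)$ with $bc\neq1$ (the only residual freedom being the flip $(b,c)\mapsto(c,b)$), $E_2(b)$ with $b\neq0$, and $E_3$; these are mutually non-isomorphic since the monomial action preserves, up to the flip, the zero pattern of $M$ and acts trivially on the normalized parameters of $E_1$ and $E_2$.

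If $\operatorname{rank}M=1$, write the columns of $M$ as $q_1p$ and $q_2p$ with $p=(p_1,p_2)^{\mathsf T}\neq0$ and $(q_1,q_2)\neq(0,0)$, so $e_i\cdot e_i=q_iw$ where $w=p_1e_1+p_2e_2$ spans $\mathbb{A}\cdot\mathbb{A}$, and compute $w\cdot w=(p_1^2q_1+p_2^2q_2)\,w=:\gamma w$. If $\gamma\neq0$, rescale $w$ to an idempotent, take the line $\{x:x\cdot w=0\}$ (which, because $\gamma\neq0$, is complementary to $\langle w\rangle$), pick a generator $v$ of it and rescale it; one lands on $E_4$ when $v\cdot v\neq0$ and on $E_2(0)$ when $v\cdot v=0$. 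If $\gamma=0$, then $w$ is a nonzero square-zero element spanning $\mathbb{A}\cdot\mathbb{A}$; one checks that $q_1,q_2$ are both nonzero exactly when $\operatorname{Ann}(\mathbb{A})=0$, and a suitably rescaled evolution basis (where the identity $\gamma=0$ is used) brings the algebra to $E_5$ in that case and to $E_6$ when $\operatorname{Ann}(\mathbb{A})\neq0$. Finally, the six families are pairwise non-isomorphic: $\dim(\mathbb{A}\cdot\mathbb{A})$ separates $E_1(c,b)$, the $E_2(b)$ with $b\neq0$, and $E_3$ from $\{E_2(0),E_4,E_5,E_6\}$; within the latter group the pair $\big(\text{whether }(\mathbb{A}\cdot\mathbb{A})\cdot(\mathbb{A}\cdot\mathbb{A})=0,\ \dim\operatorname{Ann}(\mathbb{A})\big)$ takes the four distinct values on $E_4,E_2(0),E_5,E_6$; and within the former the monomial argument above applies.

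\emph{Main obstacle.} The computation of $B=gA(g^{-1})^{\otimes2}$ and the reduction in the stratum $\det M\neq0$ are routine. The real work is the rank-one stratum: there one must actually exhibit the normalizing changes of basis (which genuinely pass outside evolution form en route and sometimes require roots) and, above all, isolate the correct discriminating invariants so that $E_2(0),E_4,E_5,E_6$ are seen to be both pairwise distinct and exhaustive — keeping the bookkeeping of $\operatorname{Ann}(\mathbb{A})$ and of $(\mathbb{A}\cdot\mathbb{A})\cdot(\mathbb{A}\cdot\mathbb{A})$ straight is the crux.
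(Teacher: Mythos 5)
Your proposal is correct, and for the nondegenerate stratum it is essentially the paper's argument: both reduce the condition (\ref{E2}) to $\xi_1\eta_1=\xi_2\eta_2=0$ when $\det\left(\begin{smallmatrix}a&b\\ c&d\end{smallmatrix}\right)\neq 0$, so that only monomial (diagonal or antidiagonal) base changes survive, and both then normalize by the explicit scaling action you write down. Where you genuinely diverge is the rank-one stratum: the paper grinds through coordinates, splitting on whether $(a,b)$ or $(c,d)$ vanishes and on the quantity $a+b\lambda^2$ (which is exactly your $\gamma$, since there $w=e_1+\lambda e_2$, $q_1=a$, $q_2=b$), whereas you argue structurally via the generator $w$ of $\mathbb{A}\cdot\mathbb{A}$, the scalar $\gamma$ with $w\cdot w=\gamma w$, and $\operatorname{Ann}(\mathbb{A})$; this is basis-free, shorter, and makes the fourfold split into $E_4,E_2(0),E_5,E_6$ transparent. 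More importantly, your invariants $\dim(\mathbb{A}\cdot\mathbb{A})$, the vanishing of $(\mathbb{A}\cdot\mathbb{A})\cdot(\mathbb{A}\cdot\mathbb{A})$, and $\dim\operatorname{Ann}(\mathbb{A})$, together with the residual monomial-symmetry computation (which fixes $b$ in $E_2(b)$ and acts on $E_1(b,c)$ only by the flip $(b,c)\mapsto(c,b)$), actually establish the ``only one'' half of the statement; the paper's proof shows exhaustiveness but never addresses pairwise non-isomorphism, so your route buys a genuinely complete argument. Two small points worth writing out when you expand the sketch: the normalization to $E_5$ in the case $\gamma=0$, $\operatorname{Ann}(\mathbb{A})=0$ needs no root extraction (take $f_1=e_1/(q_1p_1)$, $f_2=e_2/(q_2p_2)$ and use $q_1p_1^2+q_2p_2^2=0$), and the form $\left(\begin{smallmatrix}1&0&0&0\\ 0&0&0&0\end{smallmatrix}\right)$ you reach when $v\cdot v=0$ is indeed $E_2(0)$ rewritten in the basis $(e_1\cdot e_1,\ e_2)$, as the paper also notes.
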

\begin{proof} Let $\mathbb{E}$ be a nontrivial evolution algebra given by
	$E=\left(\begin{array}{cccc} a & 0 & 0 &b\\ c & 0 & 0 &d\end{array}\right)$ and \\ $E'=\left(\begin{array}{cccc} \alpha'_1 & \alpha'_2 & \alpha'_3 &\alpha'_4\\ \beta'_1 & \beta'_2 & \beta'_3 &\beta'_4\end{array}\right)=gE(g^{-1})^{\otimes 2},$ where  $g^{-1}=\left(\begin{array}{cccc} \xi_1& \eta_1\\ \xi_2& \eta_2\end{array}\right).$ For the entries of $E'$ we have
\begin{equation} \label{E1}
\begin{array}{ccccccc}
& \alpha'_1&=&\frac{1}{\Delta}(\xi^2_1(a\eta_2-c\eta_1)+\xi^2_2(b\eta_2-d\eta_1)),\\
& \alpha'_2& =& \alpha'_3= \frac{1}{\Delta}(\xi_1\eta_1(a\eta_2-c\eta_1)+\xi_2\eta_2(b\eta_2-d\eta_1)),\\
& \alpha'_4&=&\frac{1}{\Delta}(\eta^2_1(a\eta_2-c\eta_1)+\eta^2_2(b\eta_2-d\eta_1)),\\
& \beta'_1&=&\frac{1}{\Delta}(\xi^2_1(-a\xi_2+c\xi_1)+\xi^2_2(-b\xi_2+d\xi_1)),\\
& \beta'_2& =& \beta'_3=\frac{1}{\Delta}(\xi_1\eta_1(-a\xi_2+c\xi_1)+\xi_2\eta_2(-b\xi_2+d\xi_1)),\\
& \beta'_4&=&\frac{1}{\Delta}(\eta^2_1(-a\xi_2+c\xi_1)+\eta^2_2(-b\xi_2+d\xi_1)),
\end{array}	\end{equation}
where $\Delta=\xi_1\eta_2-\xi_2\eta_1.$
	In particular, one has
	\[\left(\begin{array}{c} \alpha'_2\\ \beta'_2\end{array}\right)=
	\left(\begin{array}{cc} \xi_1 & \eta_1\\ \xi_2 & \eta_2\end{array}\right)^{-1}\left(\begin{array}{cc} a & b\\ c & d\end{array}\right)\left(\begin{array}{c} \xi_1\eta_1\\ \xi_2\eta_2\end{array}\right).\]  Note also that \[\left(\begin{array}{cc} \alpha'_1 & \alpha'_4\\ \beta'_1 & \beta'_4\end{array}\right)=\left(\begin{array}{cc} \xi_1 & \eta_1\\ \xi_2 & \eta_2\end{array}\right)^{-1}\left(\begin{array}{cc} a & b\\ c & d\end{array}\right)\left(\begin{array}{cc}  \xi^2_1 & \eta^2_1\\ \xi^2_2 & \eta^2_2\end{array}\right),\] which shows that $\alpha'_1\beta'_4-\alpha'_4\beta'_1=0$ whenever $ad-bc=0.$
	
	Now we are searching possibilities to choose the base changes that make $\alpha'_1,\ \alpha'_4,\ \beta'_1,\ \beta'_4$ as simple as possible and
	\[\alpha'_2 = \alpha'_3=\beta'_2 = \beta'_3=0\]
that means \begin{equation}\label{E2}
             \left(\begin{array}{cc} \xi_1 & \eta_1\\ \xi_2 & \eta_2\end{array}\right)^{-1}\left(\begin{array}{cc} a & b\\ c & d\end{array}\right)\left(\begin{array}{c} \xi_1\eta_1\\ \xi_2\eta_2\end{array}\right)=\left(\begin{array}{c} 0 \\ 0 \end{array}\right).
           \end{equation}

We make use the following case by case considerations.\\
	 \underline{Case 1. $ad-bc\neq 0.$} In this case (\ref{E2}) is equivalent to $\xi_1\eta_1= \xi_2\eta_2=0.$ Let us consider $g=\left(\begin{array}{cc} \xi_1 & 0\\ 0 & \eta_2\end{array}\right).$ Then $\Delta=\xi_1\eta_2$ and \[ \alpha'_1=a\xi_1,\
	\alpha'_4=b\frac{\eta^2_2}{\xi_1},\
	\beta'_1=c\frac{\xi^2_1}{\eta_2},\
	\beta'_4=d\eta_2.\]
	
	Due to $ad-bc\neq 0$ one has the following cases:
	
	\underline{Case 1.1. $a\neq 0,\ d\neq 0.$} In this case one can make $ \alpha'_1=1,  \beta'_4=1$ to get \[E_{1}(b,c)=\left(\begin{array}{cccc} 1 & 0 &0 &b\\ c & 0 & 0 &1\end{array}\right), \mbox{ where } bc\neq 1.\]
	
	\underline{Case 1.2. $a\neq 0,\ d= 0.$} In this case $\beta'_4=0 $ and one can make $ \alpha'_1=1,  \beta'_1=1$ to get \[E_{2}(b)=\left(\begin{array}{cccc} 1 & 0 &0 &b\\ 1 & 0 & 0 &0\end{array}\right), \mbox{ where } b\neq 0.\]
	
	\underline{Case 1.3. $a=0,\ d\neq 0.$} In this case $ \alpha'_1=0$ and one can make $\beta'_4=1,  \alpha'_4=1$ to get $E'=\left(\begin{array}{cccc} 0 & 0 &0 &1\\ c & 0 & 0 &1\end{array}\right),$ where $c\neq 0.$ But $E'$ is isomorphic to $E_{2}(c).$
	
	\underline{Case 1.4 $a=0,\ d= 0.$} In this $ \alpha'_1=0,  \beta'_4=0$ and one can make $\beta'_1=\alpha'_4=1$ to get \[E_{3}=\left(\begin{array}{cccc} 0 & 0 &0 &1\\ 1 & 0 & 0 &0\end{array}\right).\]\\	
\underline{Case 2. $ad-bc= 0.$}
	
	\underline{Case 2.1. Both $(a,b),\ (c,d)$ are nonzero and $(c,d)=\lambda (a,b).$}  In this case
	(\ref{E1}) is equivalent to \[a\xi_1\eta_1+b\xi_2\eta_2=0,\
	\alpha'_1=\frac{\eta_2-\lambda\eta_1}{\Delta}(a\xi^2_1+b\xi^2_2),\
	\alpha'_4=\frac{\eta_2-\lambda\eta_1}{\Delta}(a\eta^2_1+b\eta^2_2),\]
	\[\beta'_1=-\frac{\xi_2-\lambda\xi_1}{\Delta}(a\xi^2_1+b\xi^2_2),\
	\beta'_4=-\frac{\xi_2-\lambda\xi_1}{\Delta}(a\eta^2_1+b\eta^2_2).\]
	
	\quad \underline{Case 2.1.1. $a+b\lambda^2\neq 0.$} Put $\xi_2-\lambda\xi_1=0.$ Then $\xi_1\neq 0,$ the equation $a\xi_1\eta_1+b\xi_2\eta_2=\xi_1(a\eta_1+b\lambda\eta_2)$ implies $a\eta_1+b\lambda\eta_2=0.$ 	
	\underline{If $b\neq 0$} then $\frac{\eta_2}{\eta_1}=-\frac{a}{b\lambda},\ \Delta=\xi_1(\eta_2-\lambda\eta_1)$ and \[\beta'_1=\beta'_4=0,  \alpha'_1=(a+b\lambda^2)\xi_1, \alpha'_4=\frac{\eta^2_1}{\xi_1}\frac{a(a+b\lambda^2)}{b\lambda^2}.\]
	It implies that in this case one can make $\alpha'_1=1,\ \alpha'_4$ equal to one or zero, depending on $a,$  to get
	\[E_4=\left(\begin{array}{cccc} 1 & 0 &0 &1\\ 0 & 0 & 0 &0\end{array}\right)\ \mbox{or}\ E'=\left(\begin{array}{cccc} 1 & 0 &0 &0\\ 0 & 0 & 0 &0\end{array}\right).\]
	The last $E'$ is isomorphic to $E_{2}(0).$ 	
	\underline{If $b=0$} then $\eta_1$ has to be zero, $\alpha'_1=a\xi_1,\
	\alpha'_4=0,$ so by making $\alpha'_1=1$ one gets $E'=\left(\begin{array}{cccc} 1 & 0 &0 &0\\ 0 & 0 & 0 &0\end{array}\right),$ which is isomorphic to $E_{2}(0).$
	
\quad	\underline{Case 2.1.2. $a+b\lambda^2= 0.$} Note that in this case $a,b, \lambda$ have to be nonzero and therefore one can make $\xi_2=\eta_1=0.$ Then $\Delta=\xi_1\eta_2,$
	and \[   \alpha'_1=a\xi_1,
	\alpha'_4=\frac{b\eta^2_2}{\xi_1},
	\beta'_1=\frac{a\lambda\xi^2_1}{\eta_2}, \beta'_4=b\lambda\eta_2.\]
	It implies that one can make $\alpha'_1=1, \beta'_4=1$ to get $\alpha'_4=\frac{a}{b\lambda^2}=-1,\ \beta'_1=\frac{b\lambda^2}{a}=-1$ and
	\[E_5=\left(\begin{array}{cccc} 1 & 0 &0 &-1\\ -1 & 0 & 0 &1\end{array}\right).\]
	

	\underline{Case 2.2. $c=d= 0.$} In this case\\
	\[ \alpha'_1=\frac{\eta_2}{\Delta}(a\xi^2_1+b\xi^2_2),\ \
	\alpha'_2 = \alpha'_3= \frac{\eta_2}{\Delta}(a\xi_1\eta_1+b\xi_2\eta_2),\ \
	\alpha'_4=\frac{\eta_2}{\Delta}(a\eta^2_1+b\eta^2_2),\]
	\[\beta'_1=-\frac{\xi_2}{\Delta}(a\xi^2_1+b\xi^2_2),\ \
	\beta'_2 = \beta'_3=-\frac{\xi_2}{\Delta}(a\xi_1\eta_1+b\xi_2\eta_2),\ \ \beta'_4=-\frac{\xi_2}{\Delta}(a\eta^2_1+b\eta^2_2).\]
	
	Taking $\xi_2=0,\ \eta_1=0$ results in
	\[ \alpha'_1=a\xi_1,\ \alpha'_2=\alpha'_3=0,\ \alpha'_4=\frac{b\eta^2_2}{\xi_1},\
	\beta'_1=\beta'_2=\beta'_3=\beta'_4=0.\]
	
	\quad \underline{Case 2.2.1. $a\neq 0.$} Then one can make $\alpha'_1=1,\ \alpha'_4 =1$ or $0,$ depending on $b$ to get
	\[E_{4}=\left(\begin{array}{cccc} 1 & 0 &0 &1\\ 0 & 0 & 0 &0\end{array}\right)\ \mbox{or}\
	E'=\left(\begin{array}{cccc} 1 & 0 &0 &0\\ 0 & 0 & 0 &0\end{array}\right),\]  respectively. The last $E'$ is isomorphic to $E_{2}(0).$
	
\quad	\underline{Case 2.2.2. $a= 0.$} Then
	\[ \alpha'_1=0,\ \alpha'_2=\alpha'_3=0,\ \alpha'_4=\frac{b\eta^2_2}{\xi_1},\
	\beta'_1=\beta'_2=\beta'_3=\beta'_4=0,\] and one can make $\alpha'_4=1$ to get
	\[E_{6}=\left(\begin{array}{cccc} 0 & 0 &0 &1\\ 0 & 0 & 0 &0\end{array}\right).\]
	
	\underline{Case 2.3. $a=b= 0.$} In this case we have
	\[ \alpha'_1=-\frac{\eta_1}{\Delta}(c\xi^2_1+d\xi^2_2),\ \
	\alpha'_2 = \alpha'_3= -\frac{\eta_1}{\Delta}(c\xi_1\eta_1+d\xi_2\eta_2),\ \
	\alpha'_4=-\frac{\eta_1}{\Delta}(c\eta^2_1+d\eta^2_2),\]
	\[\beta'_1=\frac{\xi_1}{\Delta}(c\xi^2_1+d\xi^2_2),\ \
	\beta'_2 = \beta'_3=\frac{\xi_1}{\Delta}(c\xi_1\eta_1+d\xi_2\eta_2),\ \ \beta'_4=\frac{\xi_1}{\Delta}(c\eta^2_1+d\eta^2_2),\] which is similar to that of $c=d=0$ case. A justification, similar to the case of $c=d=0$ shows that such algebras are isomorphic to those considered earlier.\end{proof}
\begin{rem}  The following classification theorem on complex evolution algebras has been stated in \cite{C}. By the next theorem we restated the result by MSC.\end{rem}
	\begin{thm} \label{T2} Every nontrivial $2$-dimensional complex evolution algebra is isomorphic
		to exactly one evolution algebra presented below by its MSC:
		\[E_1: \left(\begin{array}{cccc} 1 & 0&0&0\\ 0 &0&0&0\end{array}\right),\ E_2: \left(\begin{array}{cccc} 1 & 0&0&1\\ 0 &0&0&0\end{array}\right), \]
		\[E_3: \left(\begin{array}{cccc} 1 & 0&0&-1\\ 1 &0&0&-1\end{array}\right),\ E_4: \left(\begin{array}{cccc} 0 & 0&0&0\\ 1 &0&0&0\end{array}\right), \]
		\[E_{5_{a,b}}: \left(\begin{array}{cccc} 1 & 0&0&b\\ a &0&0&1\end{array}\right),\ E_{6_c}: \left(\begin{array}{cccc} 0 & 0&0&1\\ 1 &0&0&c\end{array}\right), \]
		where $ab\neq 1,\ c\neq 0$ and $E_{5_{a,b}}\simeq E_{5_{b,a}},\ E_{6_c}\simeq E_{6_{c'}}\Leftrightarrow
		\frac{c}{c'}=cos\frac{2k\pi}{3}+isin\frac{2k\pi}{3}$ for some $k\in\{0,1,2\}.$\end{thm}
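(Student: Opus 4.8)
The plan is to derive Theorem~\ref{T2} from Theorem~\ref{T1} (with $\mathbb F=\mathbb C$) by building an explicit dictionary between the canonical forms $E_1(c,b),E_2(b),E_3,E_4,E_5,E_6$ of Theorem~\ref{T1} and the forms $E_1,E_2,E_3,E_4,E_{5_{a,b}},E_{6_c}$ of the present statement, and then transporting the ``exactly one'' assertion and the symmetry relations across it. Since $\mathbb C$ is algebraically closed, Theorem~\ref{T1} applies and no computation is needed beyond those already carried out in its proof; every base change that intervenes is diagonal or anti-diagonal (a swap $e^1\leftrightarrow e^2$), exactly as there.

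First I would dispose of the routine matches, in each case naming the corresponding canonical form of Theorem~\ref{T1}. The form $\left(\begin{smallmatrix}1&0&0&1\\0&0&0&0\end{smallmatrix}\right)$ is literally $E_4$ of Theorem~\ref{T1}; $\left(\begin{smallmatrix}0&0&0&0\\1&0&0&0\end{smallmatrix}\right)$ becomes $E_6$ of Theorem~\ref{T1} after the swap $e^1\leftrightarrow e^2$; $\left(\begin{smallmatrix}1&0&0&-1\\1&0&0&-1\end{smallmatrix}\right)$ has $ad-bc=0$ with $(c,d)=\lambda(a,b)$ for $\lambda=1$ and $a+b\lambda^2=0$, hence falls in Case~2.1.2 of the proof of Theorem~\ref{T1} and is isomorphic to $E_5$; $\left(\begin{smallmatrix}1&0&0&0\\0&0&0&0\end{smallmatrix}\right)$ is the form shown in the proof of Theorem~\ref{T1} to be isomorphic to $E_2(0)$; and $E_{5_{a,b}}=\left(\begin{smallmatrix}1&0&0&b\\a&0&0&1\end{smallmatrix}\right)$ is exactly $E_1(a,b)$ of Theorem~\ref{T1} (under this identification the condition $ab\neq1$ becomes its $bc\neq1$), so $E_{5_{a,b}}\simeq E_{5_{b,a}}$ is the relation $E_1(a,b)\simeq E_1(b,a)$. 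The only form requiring genuine computation is $E_{6_c}=\left(\begin{smallmatrix}0&0&0&1\\1&0&0&c\end{smallmatrix}\right)$ with $c\neq0$: here $ad-bc\neq0$ with $a=0$ and $d=c\neq0$, i.e. Case~1.3 of the proof of Theorem~\ref{T1}; with the diagonal $g=\left(\begin{smallmatrix}\xi_1&0\\0&\eta_2\end{smallmatrix}\right)$ one gets $\alpha'_1=0$, $\alpha'_4=\eta_2^2/\xi_1$, $\beta'_1=\xi_1^2/\eta_2$, $\beta'_4=c\eta_2$, and normalizing $\alpha'_4=\beta'_4=1$ forces $\eta_2=c^{-1}$, $\xi_1=c^{-2}$, whence $\beta'_1=c^{-3}$; thus $E_{6_c}\simeq\left(\begin{smallmatrix}0&0&0&1\\c^{-3}&0&0&1\end{smallmatrix}\right)$, which is $E_2(c^{-3})$ of Theorem~\ref{T1} after the swap.

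Granting the dictionary, both halves of Theorem~\ref{T2} follow. For non-redundancy: the six families above go to six pairwise disjoint families of canonical forms of Theorem~\ref{T1}, so no identification crosses families, and within each family the classification of Theorem~\ref{T1} applies — the only effect being that $c\mapsto c^{-3}$ is three-to-one on $\mathbb C\setminus\{0\}$; since $c^{-3}=c'^{-3}$ is equivalent to $(c/c')^3=1$, this is precisely the stated relation $E_{6_c}\simeq E_{6_{c'}}\Leftrightarrow c/c'\in\{1,\cos\tfrac{2\pi}{3}+i\sin\tfrac{2\pi}{3},\cos\tfrac{4\pi}{3}+i\sin\tfrac{4\pi}{3}\}$, and that nothing else identifies two $E_{6_c}$'s is seen as in Case~1 of the proof of Theorem~\ref{T1}, since $\det\left(\begin{smallmatrix}0&1\\1&c\end{smallmatrix}\right)\neq0$ forces any isomorphism $E_{6_c}\to E_{6_{c'}}$ to be realized by a base change $e^1\mapsto pe^1,\,e^2\mapsto qe^2$, which yields $q=p^2$, $p^3=1$, $c'=cp^2$. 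Completeness — that every nontrivial $2$-dimensional complex evolution algebra appears on the list — then follows from Theorem~\ref{T1} via the dictionary, and this is the step requiring the most care: one must walk through the case analysis of the proof of Theorem~\ref{T1}, verify that the dictionary covers all of its canonical forms, and keep precise track, branch by branch, of which parameter values are attained, so that the boundary instances land correctly (for instance $E_2(0)$ is realized by $\left(\begin{smallmatrix}1&0&0&0\\0&0&0&0\end{smallmatrix}\right)$ while $E_2(b)$ with $b\neq0$ is realized by $E_{6_c}$ for any cube root $c$ of $b^{-1}$). A self-contained alternative, bypassing the dictionary, is to redo the case analysis of the proof of Theorem~\ref{T1} from scratch, this time choosing at each step the normalizations that land directly on the forms listed in Theorem~\ref{T2}.
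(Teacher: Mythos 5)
There is a genuine gap, and it is located exactly at the step you yourself flag as ``requiring the most care.'' Your dictionary sends the six families of Theorem~\ref{T2} to the classes $E_2(0)$, $E_4$, $E_5$, $E_6$, $E_1(a,b)$ and $E_2(b)$ with $b\neq 0$ of Theorem~\ref{T1} --- that is, to every canonical form of Theorem~\ref{T1} \emph{except} its $E_3=\left(\begin{smallmatrix}0&0&0&1\\1&0&0&0\end{smallmatrix}\right)$. That algebra is a genuine isomorphism class (it has $ad-bc\neq 0$, so it cannot be isomorphic to $E_4$ of the list, and Theorem~\ref{T1} separates it from all the rest); it would be $E_{6_0}$, but the hypothesis $c\neq 0$ excludes it from the list. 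So the completeness half of Theorem~\ref{T2} is false as stated, and no amount of walking through the case analysis will close the gap --- the walk-through produces a counterexample instead. This is precisely the point of the passage in the paper: Theorem~\ref{T2} is quoted from \cite{C}, the paper does not prove it, and the comparison with Theorem~\ref{T1} (which is the same dictionary you construct, including the isomorphism $E_2(c^{-3})\simeq E_{6_c}$ via $g=\left(\begin{smallmatrix}0&c\\c^2&0\end{smallmatrix}\right)$) is carried out exactly in order to conclude that ``the algebra $E_3=E_{6_0}$ is missed.''

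The parts of your argument that are computations are essentially correct and agree with the paper: the identifications $E_2(0)\simeq E_1$, $E_4\simeq E_2$, $E_5\simeq E_3$, $E_6\simeq E_4$, $E_1(a,b)=E_{5_{a,b}}$, $E_2(c^{-3})\simeq E_{6_c}$, and the observation that $c\mapsto c^{-3}$ being three-to-one recovers the stated relation $E_{6_c}\simeq E_{6_{c'}}\Leftrightarrow (c/c')^3=1$. But the deliverable should have been a refutation (or a corrected statement adding $E_{6_0}$ to the list), not a proof; as written, the proposal asserts that completeness ``follows from Theorem~\ref{T1} via the dictionary'' when the dictionary demonstrably fails to be surjective onto the classes of Theorem~\ref{T1}.
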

	
	Let us compare the list given above and that of Theorem \ref{T1}: \[E_2(0)\simeq E_1,\ E_4\simeq E_2,\ E_5\simeq E_3,\ E_6\simeq E_4,\ E_1(a,b)= E_{5_{a,b}},\ E_2(c^{-3})\simeq E_{6_c},\] where the last isomorphism is due to $\left(\begin{array}{cccc} 1 & 0&0&c^{-3}\\ 1 &0&0&0\end{array}\right)=gE_{6_c}(g^{-1})^{\otimes 2}$ at $g=\left(\begin{array}{cc}0&c\\c^2&0\end{array}\right).$ So we conclude that in Theorem \ref{T2} the algebra $E_3=E_{6_0}$ is missed.

\section{\bf The groups of automorphisms of $2$-dimensional evolution algebras}
Let $i\in \mathbb{F}$ stand for an element with $i^2=-1,\ I=\left(
\begin{array}{cc}
1 & 0 \\
0 & 1
\end{array}
\right)$ and $g=\left(
\begin{array}{cc}
	x & y \\
	z & t
\end{array}
\right).$

If $\mathbb{E}$ is an algebra given by MSC $E$ then its group of automorphisms  $Aut(E)$ is presented as follows
\begin{equation}\label{aut}
  Aut(E)=\{g\in GL(2,\mathbb{F}): \  gE-E(g\otimes g)=0\}.
\end{equation}

\begin{thm} Automorphism groups of all evolution algebra structures on $2$-dimensional vector space over an algebraically closed field  $\mathbb{F}$ of characteristic not $2$ are given as follows.
\begin{itemize}
\item $Aut(E_1(b,c))=\{I\},\ \mbox{if}\ b\neq c,$

\item $Aut(E_1(b,b))=\left\{I,\left(
	\begin{array}{cc}
	0 & 1 \\
	1 & 0 \\
	\end{array}
	\right)\right\},\ \mbox{if}\ b^2\neq 1,$
\item $Aut(E_2(b))=\{I\},\ \mbox{if}\ b\neq 0,$
\item $Aut(E_2(0))=\left\{\left(
\begin{array}{cc}
1 & 0 \\
t & 1-t \\
\end{array}
\right):\ t\neq 1 \right\},$
\item $Aut(E_3)=\left\{I, \left(
\begin{array}{cc}
0 & 1 \\
1 & 0 \\
\end{array}
\right),\left(
\begin{array}{cc}
t & 0 \\
0 & t^2 \\
\end{array}
\right) \left(
\begin{array}{cc}
t^2 & 0 \\
0 & t \\
\end{array}
\right),  \left(
\begin{array}{cc}
0 & t \\
t^2 & 0 \\
\end{array}
\right) ,  \left(
\begin{array}{cc}
0 & t^2 \\
t & 0 \\
\end{array}
\right) \right\},$\\

\hfill where $t=-\frac{1}{2}+i\frac{\sqrt{3}}{2},$
\item $Aut(E_4)=\left\{I, \left(
\begin{array}{cc}
1 & 0 \\
0 & -1 \\
\end{array}
\right) \right\},$
\item $Aut(E_5))=\left\{ \left(\begin{array}{cc}t&1-t\\ 1-t&t\end{array}\right):\ \ t\neq \frac{1}{2}\right\},$
\item $Aut(E_6)=\left\{\left(
\begin{array}{cc}
t^2 & s \\
0 & t \\
\end{array}
\right) :\ \ t\neq 0,\ s\in \mathbb{F} \right\}.$
\end{itemize}
\end{thm}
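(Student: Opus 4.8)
The plan is to exploit the defining equation~\eqref{aut}, namely $gE - E(g\otimes g)=0$, for each of the canonical MSCs $E_1(b,c),\dots,E_6$ produced in Theorem~\ref{T1}. Writing $g=\left(\begin{smallmatrix} x & y\\ z & t\end{smallmatrix}\right)$, the matrix $g\otimes g$ is the explicit $4\times 4$ block matrix obtained just as in the display for $(g^{-1})^{\otimes 2}$ in Section~2 (with $\xi_1,\eta_1,\xi_2,\eta_2$ replaced by $x,y,z,t$), so $E(g\otimes g)$ is a $2\times 4$ matrix whose entries are quadratic forms in $x,y,z,t$. Setting this equal to $gE$ entry-by-entry yields, for each algebra, a small system of polynomial equations; the automorphism group is its solution set intersected with $\{xt-yz\neq 0\}$. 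I would organize the proof as one short paragraph per algebra, each time writing down the system explicitly and solving it.

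The key computational simplification is that every canonical $E$ has zero second and third columns, so two of the eight scalar equations are automatically $0=0$, and the remaining six split into an ``$\alpha$-row'' triple and a ``$\beta$-row'' triple. For the diagonal-type algebras $E_1(b,c)$, $E_3$, $E_4$, $E_6$ the equations force $yz=0$ or a similar monomial condition early, cutting the analysis into the cases $g$ diagonal versus $g$ antidiagonal; one then reads off the scaling constraints (e.g.\ for $E_3$ the condition $t^3=1$ appears, giving the six listed matrices parametrized by the primitive cube root $-\tfrac12+i\tfrac{\sqrt3}{2}$, and for $E_6$ the relation $x=t^2$ with free $s=y$). For $E_2(0)$ and $E_5$ the solution locus is a one-parameter family: there the equations reduce to a single linear relation among the entries of $g$ (for $E_5$, symmetry $g=\left(\begin{smallmatrix} t & 1-t\\ 1-t & t\end{smallmatrix}\right)$ with $\det = 2t-1\neq 0$), which I would verify by direct substitution and then check the determinant condition to exclude the degenerate parameter value ($t=1$ for $E_2(0)$, $t=\tfrac12$ for $E_5$). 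For $E_1(b,c)$ with $b\neq c$ and for $E_2(b)$ with $b\neq 0$ the system collapses to $g=I$; when $b=c$ (with $b^2\neq1$) the extra antidiagonal involution survives, matching $E_1(b,b)\simeq E_1(b,c)$'s symmetry noted in Theorem~\ref{T1}.

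The step I expect to be the main obstacle is the bookkeeping for $E_3$: here $a=d=0$, $b=c=1$, so neither row gives an immediate vanishing monomial, and one must carefully combine the $\alpha$-row equations $\{x\cdot 0 - (yz\cdot 0 + \text{terms})=0,\dots\}$ — more precisely the equations coming from $gE = E(g\otimes g)$ with $E=E_3$ — to deduce first that $g$ is monomial (diagonal or antidiagonal) and then that the nonzero entries satisfy the cubic scaling $t^3=1$; missing a branch here would drop some of the six automorphisms. A minor subtlety throughout is that we are over an algebraically closed field of characteristic $\neq 2$, so square roots and the element $i$ exist and $2$ is invertible; I would flag explicitly where characteristic $2$ would change the count (notably $E_4$, $E_2(0)$ and $E_5$, whose descriptions involve division by $2$ or the sign $-1$). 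After solving each system I would close by verifying that each listed set is indeed a subgroup (closure under multiplication and inversion), which for the one-parameter families is a direct check that the parametrization is multiplicative.
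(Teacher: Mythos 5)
Your plan is essentially the paper's proof: for each canonical MSC from Theorem~\ref{T1} one writes out $gE-E(g\otimes g)$ entry by entry, solves the resulting system of quadratic equations in $x,y,z,t$ subject to $\det g\neq 0$, and reads off the group, with the monomial/diagonal dichotomy for $E_1,E_3$ and the one-parameter families for $E_2(0),E_5,E_6$ appearing exactly as you predict. Two small cautions: the second and third columns of $gE-E(g\otimes g)$ are \emph{equal}, not identically zero (for $E_1(b,c)$ they read $-xy-btz$ and $-cxy-tz$ and are precisely what forces $tz(bc-1)=0$), and for $E_5$ ``verifying by direct substitution'' only shows the symmetric matrices are automorphisms --- proving they are the \emph{only} ones is where the paper spends most of its effort on that case, reducing the system to $(x^2-z^2)\bigl((x^2-x-z^2)^2-z^2\bigr)=0$ and discarding the branches that make $g$ singular.
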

\begin{proof}
Indeed, let $E=E_1(b,c)=\left(
\begin{array}{cccc}
1 & 0 & 0 & b \\
c & 0 & 0 & 1
\end{array}
\right).$ Then
\[gE_1(b,c)-E_1(b,c)(g\otimes g)=\left(
\begin{array}{cccc}
x-x^2+c y-b z^2 & -x y-b t z & -x y-b t z & -b t^2+b x+y-y^2 \\
c t-c x^2+z-z^2 & -c x y-t z & -c x y-t z & t-t^2-c y^2+b z
\end{array}
\right).\] Therefore to describe the automorphisms we have to solve the system of equations:
\begin{equation} \label{SE1}
\begin{array}{ccccccc}
 & \ x-x^2+c y-b z^2=0 ,\\ & \ ct-cx^2+z-z^2=0,\\ &\ \quad \quad \quad -xy-b t z=0,\\ & \ \quad \quad \quad -cxy-tz=0,\\  & -bt^2+b x+y-y^2=0,\\ & \quad t-t^2-c y^2+b z=0.
\end{array}
\end{equation}
The equations $3$ and $4$ of the system of equations (\ref{SE1}) imply that $tz(bc-1)=0.$\\
\underline{Case 1. $b\neq c.$} In this case, the system above has only one solution $g=I$ due to $bc-1\neq0.$\\
\underline{Case 2. $b=c.$} In this case also the system of equations (\ref{SE1}) has only one solution: $\left(
\begin{array}{cc}
	0 & 1 \\
	1 & 0 \\
\end{array}
\right).$

Let now $E=E_2(b)=\left(
\begin{array}{cccc}
1 & 0 & 0 & b \\
1 & 0 & 0 & 0
\end{array}
\right).$ Then
\[gE_2(b)-E_2(b)(g\otimes g)=\left(
\begin{array}{cccc}
x-x^2+y-b z^2 & -xy-btz & -xy-btz & -b t^2+b x-y^2 \\
t-x^2+z & -x y & -x y & -y^2+b z
\end{array}\right).\] To find $g$ one has to solve the following system of equations with respect to $x, y, z$ and $t$:
\begin{equation} \label{SE2}
\begin{array}{cccccc}
 & \ x-x^2+y-b z^2=0,\\  & \ \quad \quad \quad t-x^2+z=0, \\ & \ \quad \quad \quad -x y-b t z=0,\\ & \ \quad \quad \quad \quad \quad \quad -x y=0,\\ & \ \ \quad -b t^2+b x-y^2=0,\\  & \ \quad \quad \quad \quad -y^2+bz=0.\end{array}\end{equation}
We make the following case by case consideration:\\
\underline{Case 1. $b\neq 0.$} Due to $xy=zt=0$ one has only two cases:

\underline{Case 1.1. $x=t=0,\ yz\neq 0.$} In this case the equation $2$ of the system of equations (\ref{SE2}) implies $z=0.$ So there is no nontrivial $g$, $Aut(E_2(b))=\{I\}.$

\underline{Case 1.2. $xt\neq 0,\ y=z=0.$} In this case we have $x=t=1$, hence, $g=I.$\\
\underline{Case 2. $b=0.$} One has $y=x^2-x,\ t=x^2-z,\ y=0$ therefore $x$ has to be $1,\ t=1-z$ and
$g=\left(
\begin{array}{cc}
1 & 0 \\
z & 1-z \\
\end{array}
\right),$ where $z\neq 1.$

Let $E=E_3=\left(
\begin{array}{cccc}
0 & 0 & 0 & 1 \\
1 & 0 & 0 & 0
\end{array}
\right).$ Then
\[gE_3-E_3(g\otimes g)= \left(
\begin{array}{cccc}
y-z^2 & -t z & -t z & -t^2+x \\
t-x^2 & -x y & -x y & -y^2+z
\end{array}
\right).\] Due to (\ref{aut})  we have two cases:\\
\underline{Case 1. $xt\neq 0,\ y=z=0.$} In this case due to $t=x^2,\ x=t^2$ one has $x=1,\ t=1$ or $x=-\frac{1}{2}+i\frac{\sqrt{3}}{2},\
t=-\frac{1}{2}-i\frac{\sqrt{3}}{2}$ or $x=-\frac{1}{2}-i\frac{\sqrt{3}}{2},\
t=-\frac{1}{2}+i\frac{\sqrt{3}}{2}.$\\
\underline{Case 2. $x=t= 0,\ yz\neq 0.$} Similarly in this case one comes to $y=1,\ z=1$ or $y=-\frac{1}{2}+i\frac{\sqrt{3}}{2},\
z=-\frac{1}{2}-i\frac{\sqrt{3}}{2}$ or $y=-\frac{1}{2}-i\frac{\sqrt{3}}{2},\
z=-\frac{1}{2}+i\frac{\sqrt{3}}{2}.$

Let $E=E_4=\left(
\begin{array}{cccc}
1 & 0 & 0 & 1 \\
0 & 0 & 0 & 0
\end{array}
\right).$ Then
 \[gE_4-E_4(g\otimes g)=\left(
\begin{array}{cccc}
x-x^2-z^2 & -x y-t z & -x y-t z & -t^2+x-y^2 \\
z & 0 & 0 & z
\end{array}
\right).\]
In this case $g=I$ or $ g=\left(
\begin{array}{cc}
1 & 0 \\
0 & -1 \\
\end{array}
\right).$

Let $E=E_5=\left(
\begin{array}{cccc}
1 & 0 & 0 & -1 \\
-1 & 0 & 0 & 1
\end{array}
\right).$ Then
\[gE_5-E_5(g\otimes g)=
\left(
\begin{array}{cccc}
 x-x^2-y+z^2 & -x y+t z & -x y+t z & t^2-x+y-y^2 \\
 -t+x^2+z-z^2 & x y-t z & x y-t z & t-t^2+y^2-z
\end{array}
\right).\]
Due to (\ref{aut}) one has the system of equations:
\begin{equation} \label{SE3}
\begin{array}{ccccccc}
& \ x-x^2-y+z^2=0, \\
  & -t+x^2+z-z^2=0,\\
  & \ \quad \quad \quad \quad x y-t z=0, \\
 & \ \quad \quad \quad \quad xy-zt=0, \\
   & \ \quad t^2-x+y-y^2 =0, \\
 & \ \quad  t-t^2+y^2-z=0
\end{array}
\end{equation}
 which can be rewritten as follows
$$y=x-x^2+z^2,\ \ t=x^2+z-z^2,$$
$$x(x^2-x-z^2)+z(x^2+z-z^2)=0,$$ $$x^2-z^2=-(x^2-x-z^2)^2+(x^2-(z^2-z))^2.$$
\underline{Case 1. $z\neq 0.$} Then $x^2+z-z^2=\frac{-x(x^2-x-z^2)}{z}$ and substitution it into the last equation of the system of equations (\ref{SE3})
implies that $$z^2(x^2-z^2)=(x^2-z^2)(x^2-x-z^2)^2,$$ $$(x^2-z^2)((x^2-x-z^2)^2-z^2)=0.$$

\underline{Case 1.1. $x^2-z^2=0.$} Then $x=\pm z,\ y=\pm z,\ t=z$, i.e., $g$ is singular.

\underline{Case 1.2.  $(x^2-x-z^2)^2-z^2=0.$} Then one has $x^2-x-z^2=\pm z,\ y=\mp z,\ t=x\pm z+z,\ x\pm z+z=\frac{-x(\pm z)}{z}=\mp x.$ Therefore there are two cases:

\quad \underline{Case 1.2.1. $x^2-x-z^2=z,\ y=-z,\ t=x+2z,\ 2x+2z=0.$} One has  $x=-z,\ y=-z,\ t=z$ and $g$ is singular.

\quad \underline{Case 1.2.2. $x^2-x-z^2=- z,\ y=z,\ t=x.$} This case implies that $z=1-x$ and $g= \left(\begin{array}{cc}x&1-x\\ 1-x&x\end{array}\right)$ is an automorphism, where $x\neq\frac{1}{2}.$\\
\underline{Case 2. $z= 0.$} Then one has $y=-(x^2-x),\ t=x^2,\ x^2(x-1)=0$ and $x^2=-(x^2-x)^2+x^4.$ So $x=1,\ y=0,\ t=1$ and one gets the trivial automorphism.

Let $E=E_6=\left(
\begin{array}{cccc}
0 & 0 & 0 & 1 \\
0 & 0 & 0 & 0
\end{array}
\right).$ Then
\[gE_6-E_6(g\otimes g)=\left(
\begin{array}{cccc}
-z^2 & -t z & -t z & -t^2+x \\
0 & 0 & 0 & z
\end{array}
\right),\] so
$g=\left(
\begin{array}{cc}
t^2 & y \\
0 & t \\
\end{array}
\right),$ where $t\neq 0.$\end{proof}
In the case of characteristic $2$ the corresponding result is as follows.
\begin{thm} Automorphism groups of all $2$-dimensional evolution algebras over an algebraically closed field  $\mathbb{F}$ of characteristic $2$ are given as follows.
\begin{itemize}
\item $Aut(E_1(b,c))=\{I\},$ if $ b\neq c,$
\item $Aut(E_1(b,b))=\left\{I,\left(
	\begin{array}{cc}
	0 & 1 \\
	1 & 0 \\
	\end{array}
	\right)\right\},$ if $b^2\neq 1,$
	\item $Aut(E_2(b))=\{I\},$ if $b\neq 0,$
	\item $Aut(E_2(0))=\left\{\left(
	\begin{array}{cc}
	1 & 0 \\
	t & 1-t \\
	\end{array}
	\right):\ t\neq 1 \right\},$
	\item $Aut(E_3)=\left\{I, \left(
	\begin{array}{cc}
	0 & 1 \\
	1 & 0 \\
	\end{array}
	\right),\left(
	\begin{array}{cc}
	t & 0 \\
	0 & t^2 \\
	\end{array}
	\right) \left(
	\begin{array}{cc}
	t^2 & 0 \\
	0 & t \\
	\end{array}
	\right),  \left(
	\begin{array}{cc}
	0 & t \\
	t^2 & 0 \\
	\end{array}
	\right) ,  \left(
	\begin{array}{cc}
	0 & t^2 \\
	t & 0 \\
	\end{array}
	\right) \right\},$\\

\hfill where $t^2+t+1=0,$
	\item $Aut(E_4)=\{I\},$
	\item $Aut(E_5)=\left\{ \left(\begin{array}{cc}t&1-t\\ 1-t&t\end{array}\right):\ \ t\in \mathbb{F} \right\},$
	\item $Aut(E_6)=\left\{\left(
	\begin{array}{cc}
	t^2 & s \\
	0 & t \\
	\end{array}
	\right) :\ \ t\neq 0,\ s\in \mathbb{F} \right\}.$
\end{itemize}
\end{thm}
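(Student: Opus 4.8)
The plan is to repeat the strategy of the previous theorem: for each of the six normal forms $E$ of Theorem~\ref{T1} we form the $2\times 4$ matrix $gE-E(g\otimes g)$ with $g=\left(\begin{smallmatrix} x & y\\ z & t\end{smallmatrix}\right)$, equate its eight entries to $0$ as required by (\ref{aut}), and solve the resulting polynomial system in $x,y,z,t$ under the constraint $xt-yz\neq 0$. Since every entry of $gE-E(g\otimes g)$ is a polynomial in $x,y,z,t$ with integer coefficients, the matrices already displayed in the proof of the characteristic $\neq 2$ theorem remain valid here after reducing their coefficients modulo $2$; so the only new work is the case analysis over a field with $2=0$.

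For $E_1(b,c)$, $E_2(b)$ and $E_6$ the earlier argument carries over with no essential change. For $E_1(b,c)$ the third and fourth equations still give $tz(bc-1)=0$, hence $tz=0$ and then $xy=0$, so $g$ is diagonal or antidiagonal; the diagonal branch is forced down to $g=I$ by the last equation, while the antidiagonal branch survives precisely when $b=c$, contributing the extra involution $\left(\begin{smallmatrix}0&1\\1&0\end{smallmatrix}\right)$. For $E_2(b)$ with $b\neq 0$ one again gets $xy=tz=0$, and the first equation forces $x=1$, eliminating every nontrivial $g$; for $b=0$ the last two equations give $y=0$ and one recovers $\left(\begin{smallmatrix}1&0\\t&1-t\end{smallmatrix}\right)$ with $t\neq 1$. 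The $E_6$ computation is untouched.

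The genuinely different cases are $E_4$, $E_5$ and $E_3$, and this is where the characteristic $2$ phenomena must be watched. For $E_4$ the system reduces, exactly as before, to $z=0$, $x=1$, $y=0$, $t^2=1$; but $t^2=1$ now means $(t-1)^2=0$, so $t=1$ is forced and the reflection $\mathrm{diag}(1,-1)$ of the odd-characteristic answer collapses into $I$, giving $Aut(E_4)=\{I\}$. For $E_5$, which in characteristic $2$ is $\left(\begin{smallmatrix}1&0&0&1\\1&0&0&1\end{smallmatrix}\right)$, eliminating $y$ and $t$ from the system leads to the single scalar relation $(x+z)^2(x+z+1)=0$; the branch $x=z$ gives only singular $g$, whereas $x+z=1$ gives $g=\left(\begin{smallmatrix}x&1-x\\1-x&x\end{smallmatrix}\right)$, whose determinant is $2x-1$, equal to $1$ for every $x$ (in odd characteristic this was the source of the restriction $x\neq\tfrac12$), so now the whole one-parameter family qualifies. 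For $E_3$ the system yields $tz=0$ together with $x=t^2$, $t=x^2$, $y=z^2$, $z=y^2$; hence on the branch $z=0$ one gets $t^3=1$ with $g=\mathrm{diag}(t^2,t)$, and on the branch $t=0$ one gets $z^3=1$ with $g=\left(\begin{smallmatrix}0&z^2\\z&0\end{smallmatrix}\right)$. Since $X^3-1=(X-1)(X^2+X+1)$ with $X^2+X+1$ separable over $\mathbb{F}$ (its derivative is $1$), the algebraically closed field $\mathbb{F}$ still contains three cube roots of unity, namely $1$ and the two roots of $t^2+t+1=0$; substituting them back produces the six-element group displayed in the statement.

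The one real obstacle is bookkeeping: every simplification in the odd-characteristic proof that quietly used $2^{-1}$ --- completing a square, computing the determinant of the $E_5$ family, extracting the two roots $\pm 1$ of $t^2=1$ --- has to be redone, and the cube roots of unity needed for $E_3$ must be obtained intrinsically, as the roots of $t^2+t+1$, rather than from the formula $-\tfrac12\pm\tfrac{\sqrt3}{2}i$, which is meaningless when $2=0$.
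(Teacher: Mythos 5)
Your proposal is correct: the paper in fact states this characteristic~$2$ theorem without a separate proof, and your adaptation of the computations from the characteristic~$\neq 2$ case is exactly the intended route. All the delicate points check out --- $t^2=1$ forcing $t=1$ for $E_4$, the factorization $xy+tz=(x+z)^2(x+z+1)$ and the determinant $2x-1=1$ for $E_5$, and obtaining the cube roots of unity for $E_3$ as roots of the separable polynomial $t^2+t+1$.
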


\section{\bf Derivation algebras of $2$-dimensional evolution algebras }
If $\mathbb{E}$ is an algebra given by MSC $E$ then the algebra of its derivations $Der(E)$ is presented as follows
\[ Der(E)=\{D\in M(2;\mathbb{F}):\  E(D\otimes I+I\otimes D)-DE=0\}.\]

\begin{thm} Derivations of all $2$-dimensional evolution algebras over an algebraically closed field $\mathbb{F}$ of characteristic not $2,\ 3$ are given as follows.
\begin{itemize}
\item $Der(E_1(b,c))=\{0\},$
\item $Der(E_2(b))=\{0\},$ if $b\neq 0,$
\item $Der(E_2(0))=\left\{\left(
\begin{array}{cc}
0 & 0 \\
t & -t \\
\end{array} \right):\ t\in \mathbb{F}\right\} ,$
\item $Der(E_3)=Der(E_4)=\{0\},$
\item $Der(E_5)=\left\{\left(
\begin{array}{cc}
-t & t \\
t & -t \\
\end{array} \right):\ t\in \mathbb{F}\right\},$
\item $Der(E_6)=\left\{\left(
\begin{array}{cc}
2t & s \\
0 & t \\
\end{array} \right):\ t,s\in \mathbb{F}\right\}.$
\end{itemize}
\end{thm}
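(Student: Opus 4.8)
The plan is to handle each of the six normal forms $E_1(b,c), E_2(b), E_3, E_4, E_5, E_6$ in turn, in each case writing out the defining equation $E(D\otimes I+I\otimes D)-DE=0$ for a generic $D=\left(\begin{smallmatrix} p & q\\ r & s\end{smallmatrix}\right)$ and solving the resulting linear system in $p,q,r,s$ with coefficients that are polynomials in the structure constants. Since each MSC has the block form $\left(\begin{smallmatrix} a & 0 & 0 & b\\ c & 0 & 0 & d\end{smallmatrix}\right)$, the matrix $D\otimes I+I\otimes D$ has the $4\times 4$ shape whose relevant columns (the first and fourth, since columns $2,3$ of $E$ vanish) are $\left(\begin{smallmatrix} 2p\\ r\\ r\\ 0\end{smallmatrix}\right)$-type expressions; concretely $E(D\otimes I+I\otimes D)$ in column $1$ is $(2p\,(\text{col}_1 E) + 2r\,\text{(from the }\xi\eta\text{ cross terms}))$ and in column $4$ is $2s\,(\text{col}_4 E)+2q\,(\cdots)$. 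I would simply compute, for each $E_i$, the $2\times 4$ matrix $E(D\otimes I+I\otimes D)-DE$ explicitly (as the automorphism section does for $gE-E(g\otimes g)$) and read off the four independent scalar equations.

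First I would dispose of the rigid cases. For $E_1(b,c)$ the first and fourth column equations force, after using $bc\neq 1$, that $p=q=r=s=0$; the off-diagonal ($2$nd/$3$rd column) equations $br=0$, $cq=0$ together with the diagonal ones give the same conclusion when $b$ or $c$ is zero, so $Der(E_1(b,c))=\{0\}$ unconditionally. For $E_2(b)$ with $b\neq 0$ the same computation — now with $d=0$, $a=c=1$ — again collapses to $D=0$. For $E_3$ ($a=d=0$, $b=c=1$) and $E_4$ ($a=b=1$, $c=d=0$) the linear systems are small and one checks directly that the only solution is $D=0$; here the characteristic $\neq 2$ hypothesis is what lets us cancel the factors of $2$ coming from $D\otimes I+I\otimes D$.

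The three nontrivial algebras $E_2(0)$, $E_5$, $E_6$ are where the actual one-parameter (resp. two-parameter) families appear. For $E_2(0)=\left(\begin{smallmatrix}1&0&0&0\\1&0&0&0\end{smallmatrix}\right)$ the equations reduce to $p=0$, $q=0$, $s=-r$ with $r$ free, giving the stated family $\left(\begin{smallmatrix}0&0\\t&-t\end{smallmatrix}\right)$. For $E_5$ the symmetry of the MSC produces the system whose solution set is $\left(\begin{smallmatrix}-t&t\\t&-t\end{smallmatrix}\right)$; I would verify closure under nothing (derivation algebras need only be linear subspaces, so no bracket check is needed) and just confirm the solution space is one-dimensional. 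For $E_6=\left(\begin{smallmatrix}0&0&0&1\\0&0&0&0\end{smallmatrix}\right)$ the column-$4$ equation gives $r=0$ and a relation $p=2s$, with $q$ free, yielding $\left(\begin{smallmatrix}2t&s\\0&t\end{smallmatrix}\right)$ — and this is precisely the place where characteristic $\neq 3$ could in principle matter (a coefficient $3$ or the interplay of the $2s$ with the cubic-type scaling seen in $\mathrm{Aut}(E_6)$); I would check that no hidden factor of $3$ degenerates the rank of the system, which is the one spot warranting care.

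The main obstacle is bookkeeping rather than conceptual: correctly expanding $E(D\otimes I+I\otimes D)$ for each $E_i$ and not dropping the characteristic-induced cancellations. I expect the characteristic $\neq 2,3$ restriction to enter exactly twice — the factor $2$ from $D\otimes I+I\otimes D$ in the rigid cases $E_1,E_2(b\neq 0),E_3,E_4$, and a factor $3$ (or $6$) in the analysis of $E_6$ — so I would flag those cancellations explicitly and note that in characteristics $2$ or $3$ the solution spaces jump, which is why a separate statement would be needed there.
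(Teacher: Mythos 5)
Your overall strategy --- write out $E(D\otimes I+I\otimes D)-DE$ entrywise for each normal form and solve the resulting linear system in the four entries of $D$ --- is exactly what the paper does, and carried out correctly it yields the theorem. However, two concrete claims in your sketch are wrong and would derail the execution. First, for $E_1(b,c)$ you assert that the first- and fourth-column equations alone force $D=0$ once $bc\neq 1$. They do not: writing $D=\left(\begin{smallmatrix}x&y\\z&t\end{smallmatrix}\right)$, those four equations are $x=cy$, $t=bz$, $2bt-bx-y=0$, $2cx-ct-z=0$, and eliminating gives $y(1-bc)(1+3bc)=0$; so for $bc=-\tfrac{1}{3}$ (e.g.\ $b=1$, $c=-\tfrac{1}{3}$, $(x,y,z,t)=(-\tfrac{y}{3},\,y,\,\tfrac{y}{3},\,\tfrac{y}{3})$) there are nonzero solutions of that subsystem. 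The decisive equations are the middle-column ones, which you mis-transcribe as $br=0$, $cq=0$; they are actually $q+br=0$ and $cq+r=0$, and it is these that give $(1-bc)r=0$, hence $r=q=0$ and then $x=cy=0$, $t=bz=0$. That is precisely the paper's argument, and it needs no case split on whether $b$ or $c$ vanishes.

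Second, your accounting of where the hypothesis on the characteristic is used is off. For $E_6$ the system reads $r=0$, $p=2s$ with $q,s$ free; no factor of $3$ ever appears, and indeed the paper's characteristic-$3$ theorem lists the same answer for $E_6$. The place where characteristic $\neq 3$ is genuinely needed is $E_3$: there the two diagonal equations are $2t-x=0$ and $2x-t=0$, which combine to $3t=0$; in characteristic $3$ this leaves the one-parameter family $\left(\begin{smallmatrix}2t&0\\0&t\end{smallmatrix}\right)$, exactly as the paper records in its separate characteristic-$3$ statement. Similarly, characteristic $\neq 2$ is consumed by $E_4$, where the equation $2t-x=0$ together with $x=0$ must force $t=0$. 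With these corrections your computation goes through and coincides with the paper's proof.
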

\begin{proof}
Let $D=\left(\begin{array}{cc}
x & y \\
z & t \\
\end{array}\right)$ be any element in $M(2;\mathbb{F}).$\\
If $E=E_1(b,c)=\left(
\begin{array}{cccc}
1 & 0 & 0 & b \\
c & 0 & 0 & 1
\end{array}
\right)$ then\\
\[ E_1(b,c)(D\otimes I+I\otimes D)-DE_1(b,c)=\left(
\begin{array}{cccc}
x-c y & y+b z & y+b z & 2 b t-b x-y \\
-c t+2 c x-z & c y+z & c y+z & t-b z
\end{array}
\right)\] and one has to solve the system of equations:
\begin{equation} \label{SE4}
\begin{array}{ccccccc} & \ \quad \quad \quad x - c y=0,\\ & \ -c t + 2 c x - z=0,\\ & \ \quad \quad \quad y + b z=0,\\ & \ \quad \quad \quad c y + z=0,\\ & \ \ 2 b t-b x-y=0,\\ & \ \quad \quad \quad t-b z=0\end{array}\end{equation} to find the derivations. The equations $3,\ 4$ of the system of equations (\ref{SE4}) imply $ z(1-bc)=0.$ Therefore due to $bc\neq 1$ one has $x=y=t=z=0$ and $D=0,$ which implies that  $Der(E_1(b,c))=\{0\}.$

Let $E=E_2(b)=\left(
\begin{array}{cccc}
1 & 0 & 0 & b \\
1 & 0 & 0 & 0
\end{array}
\right).$ Then
\[E_2(b)(D\otimes I+I\otimes D)-DE_2(b)=\left(
\begin{array}{cccc}
x-y & y+b z & y+b z & 2 b t-b x \\
-t+2 x-z & y & y & -b z
\end{array}
\right),\] which implies due to (\ref{aut}) that $x=y=0,\ t=-z,\ bz=0.$ So $E_2(b)$ has a nontrivial derivation $D=
\left(
\begin{array}{cc}
0 & 0 \\
z & -z \\
\end{array} \right)$ if and only if $b=0.$

Let $E=E_3=\left(
\begin{array}{cccc}
0 & 0 & 0 & 1 \\
1 & 0 & 0 & 0
\end{array}
\right).$ Then
\[ E_3(D\otimes I+I\otimes D)-DE_3= \left(
\begin{array}{cccc}
-y & z & z & 2 t-x \\
-t+2 x & y & y & -z
\end{array}
\right)\] and one gets $D=0.$

Let $E=E_4=\left(
\begin{array}{cccc}
1 & 0 & 0 & 1 \\
0 & 0 & 0 & 0
\end{array}
\right).$ Then
\[E_4(D\otimes I+I\otimes D)-DE_4=\left(
\begin{array}{cccc}
x & y+z & y+z & 2 t-x \\
-z & 0 & 0 & -z
\end{array}
\right)\] and we get $D=0.$

Let $E=E_5=\left(
\begin{array}{cccc}
1 & 0 & 0 & -1 \\
-1 & 0 & 0 & 1
\end{array}
\right).$ Then
\[ E_5(D\otimes I+I\otimes D)-DE_5=\left(
\begin{array}{cccc}
x+y & y-z & y-z & -2t+x-y \\
t-2x-z & -y+z & -y+z & t+z
\end{array}
\right)\] and one easily comes to $D=\left(
\begin{array}{cc}
-z & z \\
z & -z \\
\end{array}
\right).$

Let $E=E_6=\left(
\begin{array}{cccc}
0 & 0 & 0 & 1 \\
0 & 0 & 0 & 0
\end{array}
\right).$ Then
\[E_6(D\otimes I+I\otimes D)-DE_6=\left(
\begin{array}{cccc}
0 & z & z & 2 t-x \\
0 & 0 & 0 & -z
\end{array}
\right)\] and one obtains
$D=\left(
\begin{array}{cc}
2t & y \\
0 & t \\
\end{array}
\right).$\end{proof}

 Here are the corresponding results in the case of characteristic $2$ and $3.$

\begin{thm} Derivations of all $2$-dimensional evolution algebras over an algebraically closed field $\mathbb{F}$ of characteristic $2$ are given as follows.
\begin{itemize}
	\item $Der((E_1(b,c))=\{0\}\ ,$ 	
	\item $Der(E_2(b))=\{0\},$ if $b\neq 0,$
	\item $Der(E_2(0))=\left\{\left(
	\begin{array}{cc}
	0 & 0 \\
	t & -t \\
	\end{array} \right):\ t\in \mathbb{F}\right\},$
 \item $Der(E_3)=\{0\},$
 \item $Der(E_4)=\left\{\left(
	\begin{array}{cc}
	0 & 0 \\
	0 & t \\
	\end{array} \right):\ t\in \mathbb{F}\right\},$
	\item $Der(E_5)=\left\{\left(
	\begin{array}{cc}
	t & -t \\
	t & -t \\
	\end{array} \right):\ t\in \mathbb{F}\right\},$
	\item $Der(E_6)=\left\{\left(
	\begin{array}{cc}
	0 & s \\
	0 & t \\
	\end{array} \right):\ t,\ s\in \mathbb{F}\right\}.$
\end{itemize}
\end{thm}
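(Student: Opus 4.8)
The plan is to proceed exactly as in the preceding derivation theorem, the only change being that arithmetic is now carried out over a field in which $2=0$. For each of the six normal forms $E_1(b,c),\ E_2(b),\ E_3,\ E_4,\ E_5,\ E_6$ of Theorem~\ref{T1} I would set $D=\left(\begin{smallmatrix} x & y\\ z & t\end{smallmatrix}\right)$, form the $2\times 4$ matrix $E(D\otimes I+I\otimes D)-DE$, equate it to the zero matrix, and solve the resulting linear system for $x,y,z,t$. What makes this cheap is that the polynomial entries of $E(D\otimes I+I\otimes D)-DE$ are \emph{identical} to those already displayed in the proof of the characteristic $\ne 2,3$ theorem, and those expressions are valid over any field; so no new matrix multiplications are needed — I would simply reduce those entries modulo $2$ and re-solve. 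I would also record once that, by the evolution structure, the second and third columns of each such matrix always coincide, so every case amounts to six scalar equations.

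Next I would dispatch the cases where characteristic $2$ does not affect the outcome. For $E_1(b,c)$ the system reduces, via the relevant pair of equations, to $y(1+bc)=0$; since $bc\ne 1$ forces $1+bc\ne 0$ in characteristic $2$, we get $y=0$ and hence $x=z=t=0$, so $Der(E_1(b,c))=\{0\}$. For $E_2(b)$ one similarly obtains $x=y=0$, $t=z$, $bz=0$, which yields the stated nonzero derivation exactly when $b=0$. For $E_3$ the surviving equations immediately give $D=0$. For $E_6$ one reads off $z=0$ and $x=0$ (the entry $2t-x$ having become $-x$), with $y$ and $t$ free, so $Der(E_6)=\left\{\left(\begin{smallmatrix}0&s\\ 0&t\end{smallmatrix}\right)\right\}$; the only difference from the general case is that the relation $x=2t$ degenerates to $x=0$ while $t$ stays free.

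The one place where I expect genuine care is needed is $E_4$, where characteristic $2$ changes the answer from $\{0\}$ to a one-parameter family. From $E_4(D\otimes I+I\otimes D)-DE_4=\left(\begin{smallmatrix} x & y+z & y+z & 2t-x\\ -z & 0 & 0 & -z\end{smallmatrix}\right)$ we get $z=0$, then $x=0$, then $y=0$; but the entry $2t-x$ now reads $-x=0$, which is already satisfied, so $t$ is unconstrained and $Der(E_4)=\left\{\left(\begin{smallmatrix}0&0\\ 0&t\end{smallmatrix}\right)\right\}$ — in the general case it was precisely the relation $2t=0$ that killed $t$. For $E_5$, using the matrix displayed in the previous proof together with $-2t=-2x=0$ and $-z=z$, the six equations collapse to $x=y$, $y=z$, $t=z$, i.e. $x=y=z=t$; since $-1=1$ this common value may be written with a minus sign in the same slots as in the general case, giving the stated form $\left(\begin{smallmatrix}t&-t\\ t&-t\end{smallmatrix}\right)$. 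Collecting the seven items completes the proof; the whole difficulty is just bookkeeping of which coefficients ($2$ and $-1$) collapse, the single substantive surprise being the extra free parameter appearing in $Der(E_4)$.
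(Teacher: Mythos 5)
Your proposal is correct and follows exactly the computation the paper uses (the paper in fact states the characteristic~$2$ theorem without a separate proof, relying on the same reduction of the displayed matrices $E(D\otimes I+I\otimes D)-DE$ modulo the characteristic). All six case analyses check out, including the two places where the answer genuinely changes: the extra free parameter in $Der(E_4)$ and the disappearance of the constraint $t\neq 0$ analogue in $Der(E_5)$, where $x=y=z=t$ replaces the $\pm$ pattern since $-1=1$.
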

\begin{thm} Derivations of all $2$-dimensional evolution algebras over an algebraically closed field $\mathbb{F}$ of characteristic $3$ are given as follows.
\begin{itemize}
\item $Der(E_1(b,c))=\{0\} ,$
\item $Der(E_2(b))=\{0\},$ if $b\neq 0,$
\item $Der(E_2(0))=\left\{\left(
	\begin{array}{cc}
	0 & 0 \\
	t & -t \\
	\end{array}\right):\ t\in \mathbb{F}\right\},$
\item $Der(E_3)=\left\{\left(
	\begin{array}{cc}
	2t & 0 \\
	0 & t \\
	\end{array}\right):\ t\in \mathbb{F}\right\},$
\item $Der(E_4)=\{0\},$
\item $Der(E_5)=\left\{\left(
	\begin{array}{cc}
	-t & t \\
	t & -t \\
	\end{array}\right):\ t\in \mathbb{F}\right\},$
\item $Der(E_6)=\left\{\left(
	\begin{array}{cc}
	2t & s \\
	0 & t \\
	\end{array}\right):\ t,\ s\in \mathbb{F}\right\}.$
\end{itemize}\end{thm}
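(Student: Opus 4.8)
The plan is to repeat, over a field $\mathbb{F}$ of characteristic $3$, the same computation that establishes the characteristic $\neq 2,3$ case. For each canonical representative $E_i$ from Theorem~\ref{T1} I would substitute a generic matrix $D=\left(\begin{smallmatrix}x&y\\ z&t\end{smallmatrix}\right)$ into the defining condition $E_i(D\otimes I+I\otimes D)-DE_i=0$; the six resulting $2\times 4$ matrices are exactly the polynomial expressions already displayed in the proof of the characteristic $\neq 2,3$ theorem, so the problem reduces to re-solving the six corresponding homogeneous linear systems, now with $3=0$.

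The structural point is that the only arithmetic used in the generic solution is the invertibility of the integers $1$ and $2$, and $2\neq 0$ is still true in characteristic $3$. Hence every step of the form ``$2s=0$, therefore $s=0$'' survives, and the systems for $E_1(b,c)$, $E_2(b)$ (both for $b\neq 0$ and $b=0$), $E_4$, $E_5$ and $E_6$ are solved verbatim, producing the same answers $\{0\}$, $\{0\}$ or $\left\{\left(\begin{smallmatrix}0&0\\ t&-t\end{smallmatrix}\right)\right\}$, $\{0\}$, $\left\{\left(\begin{smallmatrix}-t&t\\ t&-t\end{smallmatrix}\right)\right\}$ and $\left\{\left(\begin{smallmatrix}2t&s\\ 0&t\end{smallmatrix}\right)\right\}$, respectively. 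I would write out one or two of these explicitly so that it is transparent that no coefficient divisible by $3$ is ever inverted along the way.

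The only place where characteristic $3$ genuinely changes the outcome is $E_3$. There the relation forces $y=z=0$ together with $x=2t$ and $t=2x$, whence $t=4t$, i.e. $3t=0$. When $3\neq 0$ this gives $t=x=0$ and $Der(E_3)=\{0\}$, whereas when $3=0$ the equation $3t=0$ is automatically satisfied, $t$ becomes a free parameter, and one obtains the one-parameter family $Der(E_3)=\left\{\left(\begin{smallmatrix}2t&0\\ 0&t\end{smallmatrix}\right):t\in\mathbb{F}\right\}$ asserted in the statement. Since each displayed set is patently a linear subspace of $M(2;\mathbb{F})$ and $Der(E)$ is automatically closed under the commutator, nothing beyond solving the linear systems is required. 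I expect the only real effort to be bookkeeping — verifying case by case that characteristic $3$ leaves all systems but the one for $E_3$ unaffected, and correctly pinpointing the collapse of $3t=0$ as the single source of the discrepancy with the characteristic $\neq 2,3$ theorem.
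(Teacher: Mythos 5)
Your proposal is correct and follows exactly the route the paper intends: the characteristic-$3$ theorem is stated without a separate proof precisely because one re-solves the same six linear systems displayed in the proof of the characteristic $\neq 2,3$ case, and your identification of $E_3$ (where $x=2t$, $t=2x$ force $3t=0$, which becomes vacuous in characteristic $3$) as the unique point of divergence is the whole content of the verification. Nothing is missing.
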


\begin{center}{\textbf{Acknowledgments}}
\end{center}
The second author's research is supported by FRGS14-153-0394, MOHE and the third author acknowledges MOHE for a support by grant 01-02-14-1591FR.
\vskip 0.4 true cm

\end{document}